\newcommand{\conv}{\operatorname{conv}}
\newcommand{\floor}[1]{\lfloor#1\rfloor}
\newcommand{\ceil}[1]{\lceil#1\rceil}
\newcommand{\rank}{\operatorname{rank}}
\newcommand{\R}{\mathbb{R}}
\newcommand{\Z}{\mathbb{Z}}
\title{Distances of optimal solutions of mixed-integer programs}
\institute{Joseph Paat, Robert Weismantel, Stefan Weltge \at Institute for Operations Research, ETH Zurich, Switzerland}
\author{Joseph Paat \and Robert Weismantel \and Stefan Weltge}
\begin{document}

\maketitle

\begin{abstract}
A classic result of Cook et al. (1986) bounds the distances between optimal solutions of mixed-integer linear programs and optimal solutions of the corresponding linear relaxations.
Their bound is given in terms of the number of variables and a parameter $ \Delta $, which quantifies sub-determinants of the underlying linear inequalities.
We show that this distance can be bounded in terms of $ \Delta $ and the number of integer variables rather than the total number of variables.
To this end, we make use of a result by Olson (1969) in additive combinatorics and demonstrate how it implies feasibility of certain mixed-integer linear programs.
We conjecture that our bound can be improved to a function that only depends on $ \Delta $, in general.
\end{abstract}


\section{Introduction}
In this paper, we consider the question of bounding distances between optimal solutions of mixed-integer linear programs that only differ in the sets of integer constraints.
Let $ A \in \Z^{m\times n} $, $ b \in \Z^m $, and $ c \in \R^n $.
For $ I \subseteq \{1, \dotsc, n\} =: [n] $, consider the mixed-integer linear program
\begin{equation}\label{eqIMIP}
    \max \, \{ c^\intercal x : Ax \le b, \, x_i \in \Z \text{ for all } i \in I \}.\tag{$I$-MIP}
\end{equation}
Notice that ($[n]$-MIP) describes a pure integer linear program and ($\emptyset$-MIP) describes its standard relaxation, which is a linear program.
Assuming that ($I$-MIP) has an optimal solution for every $ I \subseteq [n] $, we are interested in the following classic question.
Given $ I,J \subseteq [n] $ and an optimal solution for ($ I $-MIP), how close is the nearest optimal solution for ($ J $-MIP)?
We measure distance with respect to the maximum norm $ \| \cdot \|_\infty $ and focus on bounds that only depend on $ A $, $ I $, and $ J $.

One of the first explicit attempts to obtain such distance bounds can be found in the work of Blair and Jeroslow~\cite{BJ1977,BJ1979}, which was later improved by Cook~et~al.~\cite{CGST1986}.
To state their result, let $ \Delta = \Delta(A) $ denote the largest absolute value of any determinant of a square submatrix of $ A $.
\begin{theorem}[{Cook et al. (1986), see~\cite[Thm. 1 \& Rem. 1]{CGST1986}}] \label{thm0toN}
    Let $ I,J \subseteq [n] $ such that~($J$-MIP) has an optimal solution and either $ I = \emptyset $ or $ J = \emptyset $.
    For every optimal solution $ w $ of~($I$-MIP), there exists an optimal solution $z$ of~($J$-MIP) such that $\|w-z\|_{\infty} \le n\Delta$.
\end{theorem}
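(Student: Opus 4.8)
The plan is to isolate a single decomposition lemma and then finish the two cases $I=\emptyset$ and $J=\emptyset$ by different roundings of the same decomposition. Throughout I fix $w$ optimal for~($I$-MIP), let $z$ be \emph{any} optimal solution of~($J$-MIP) (which exists by assumption), and set $v := w - z$. I would partition the rows $a_i^\intercal$ of $A$ by the sign of $a_i^\intercal v$ and introduce the cone
\[
  C := \{\, y \in \R^n : a_i^\intercal y \le 0 \text{ if } a_i^\intercal v < 0,\ a_i^\intercal y \ge 0 \text{ if } a_i^\intercal v > 0,\ a_i^\intercal y = 0 \text{ if } a_i^\intercal v = 0 \,\},
\]
which contains $v$ by construction. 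By Carath\'{e}odory's theorem for cones, $v = \sum_{j=1}^{k} \mu_j h_j$ with $k \le n$, $\mu_j > 0$, and $h_j$ linearly independent generators of $C$; scaling each generator to be integral and applying Cramer's rule to the subsystem of $A$ defining it shows $h_j \in \Z^n$ with $\|h_j\|_\infty \le \Delta$. The decisive feature is \emph{sign-compatibility}: each $h_j$ lies in $C$, so for every row $i$ the number $a_i^\intercal h_j$ is zero or shares the sign of $a_i^\intercal v$. A one-line check then shows that $z + \sum_j \beta_j h_j$ is feasible for $Ax \le b$ whenever $0 \le \beta_j \le \mu_j$ — this is the engine that lets me move from $z$ toward $w$ by integral steps without leaving the feasible region.

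Next I would bring in optimality through LP duality. In either case one of $w,z$ is an optimal solution of the linear relaxation~($\emptyset$-MIP) ($w$ when $I=\emptyset$, $z$ when $J=\emptyset$), and at that point $c = \sum_{i} \lambda_i a_i$ with $\lambda_i \ge 0$, summed over the rows tight there. Since those tight rows enter $C$ with a definite sign, this pins down the sign of $c^\intercal h_j$ for every generator: if $I = \emptyset$ one gets $c^\intercal h_j \ge 0$, while if $J = \emptyset$ one gets $c^\intercal h_j \le 0$. In the case $I = \emptyset$ I would then round down, setting $z' := z + \sum_j \floor{\mu_j} h_j$. This point is feasible and $J$-integral (as $z$ is $J$-integral and the $h_j$ are integral), so it is feasible for~($J$-MIP); it satisfies $c^\intercal z' \ge c^\intercal z$ because $c^\intercal h_j \ge 0$, hence is optimal; and it lies at distance $\|\sum_j \{\mu_j\} h_j\|_\infty \le \sum_j \{\mu_j\}\|h_j\|_\infty < n\Delta$ from $w$, since each fractional part is below $1$.

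The case $J = \emptyset$ is where I expect the main difficulty, since $z'$ now has to land \emph{exactly} on the optimal hyperplane $c^\intercal x = c^\intercal z$ rather than merely rounding toward it, and the generators with $c^\intercal h_j < 0$ destroy optimality if used at all. The resolution is to bound their coefficients using optimality of $w$ for~($I$-MIP): if some generator had $c^\intercal h_j < 0$ and $\mu_j \ge 1$, then $w - h_j = z + (\mu_j - 1)h_j + \sum_{l \neq j} \mu_l h_l$ would be a feasible (by the sign-compatibility engine) and $I$-integral point with $c^\intercal(w - h_j) > c^\intercal w$, contradicting optimality of $w$. Hence every such generator satisfies $\mu_j < 1$. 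I would then discard exactly these generators, setting $z' := z + \sum_{j : c^\intercal h_j = 0} \mu_j h_j$: it is feasible, its objective value equals $c^\intercal z$ (the omitted generators are precisely the objective-changing ones), so it is LP-optimal, and $\|w - z'\|_\infty = \|\sum_{j : c^\intercal h_j < 0} \mu_j h_j\|_\infty \le \sum_j \mu_j \|h_j\|_\infty < n\Delta$ because each such $\mu_j < 1$. The conceptual point I would stress is the complementary use of integrality: LP-optimality controls the \emph{signs} $c^\intercal h_j$, whereas integer optimality of $w$ controls the \emph{magnitudes} $\mu_j$, and only together do they place an optimal $z'$ within distance $n\Delta$ of $w$.
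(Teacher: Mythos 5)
Your proof is correct. Note that the paper does not actually reprove Theorem~\ref{thm0toN} --- it is cited as a black box --- so the relevant comparison is with the proof of Theorem~\ref{thmMain1}, which shares your skeleton but diverges at the key step. You and the paper both decompose the difference of the two optimal solutions over a sign-compatible cone into at most $n$ integral generators of norm at most $\Delta$ (your inclusion of equality constraints for rows with $a_i^\intercal v = 0$ is a sensible choice: it is exactly what makes the tight rows at the LP-optimal point contribute with a definite sign), and both rely on the same ``engine'' that any coefficient vector in the box $[0,\mu_1]\times\dotsb\times[0,\mu_k]$ yields a feasible point. Where you differ is in how the coefficients are adjusted: you invoke LP duality at whichever of $w,z$ solves the relaxation to pin down the signs of $c^\intercal h_j$, then either round down to $\floor{\mu_j}$ (case $I=\emptyset$) or use integer optimality of $w$ to force $\mu_j<1$ on the objective-decreasing generators and discard them (case $J=\emptyset$); this is essentially the original Cook et al.\ argument, and your treatment of the second case is a genuinely complete reconstruction of it. The paper deliberately bypasses this duality step: it instead maximizes $\gamma_1+\dotsb+\gamma_k$ over the set $G$ of coefficient vectors giving an integral combination on the relevant coordinates, derives optimality of the shifted points from that maximal choice alone, and bounds the leftover mass $\sum_i(\lambda_i-\gamma_i)$ via Olson's theorem on the Davenport constant. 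The trade-off is exactly the scope of the result: duality is only available when one of the two programs is the plain linear relaxation, so your route cannot leave the hypothesis ``$I=\emptyset$ or $J=\emptyset$,'' whereas the paper's maximality-plus-Davenport argument handles arbitrary $I\ne J$ and sharpens $n$ to $|I\cup J|$.
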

Observe that Theorem~\ref{thm0toN} only refers to situations in which one of the programs considered is the linear program.
For general $ I, J \subseteq [n] $, a bound of $ 2n\Delta $ is obtained using the triangle inequality.
However, for any choice of $I$ and $J$, their resulting bound depends on $ \Delta $ and the total number of variables $ n $.
The main purpose of this paper is to strengthen this dependence by showing that $ n $ can be replaced by the number of integer variables that appear in the two programs.
\begin{theorem} \label{thmMain1}
    Let $I,J\subseteq [n] $ with $ I \ne J $ such that~($J$-MIP) has an optimal solution.
    For every optimal solution $ w $ of~($I$-MIP), there exists an optimal solution $z$ of ($J$-MIP) such that $ \| w - z \|_{\infty} < |I \cup J| \Delta $.
\end{theorem}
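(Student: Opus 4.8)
The plan is to fix an optimal solution $w$ of~($I$-MIP) and, among all optimal solutions of~($J$-MIP), to choose one, call it $z$, that minimizes $\|w-z\|_1$; such a minimizer exists because the set of optimal solutions of~($J$-MIP) is closed and we minimize $\|w-\cdot\|_1$ over its intersection with a large ball that already contains one optimal solution, which is compact. Writing $g := w - z$, the first goal is a structural decomposition of $g$. Since $w$ and $z$ both lie in $\cP := \{x : Ax \le b\}$, the vector $g$ belongs to the cone $C := \{y : a_i^\intercal y \le 0 \text{ for every row } i \text{ tight at } z\}$ of feasible directions at $z$. I would decompose $g$ conformally inside $C$, i.e.\ write $g = \sum_{j} \lambda_j r_j$ with $\lambda_j > 0$ and with integral generators $r_j \in C$ whose signs are compatible with those of $g$ coordinate by coordinate. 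Choosing the $r_j$ to be edge directions of a simplicial subcone of $C$, Cramer's rule bounds their entries by subdeterminants of $A$, so that $\|r_j\|_\infty \le \Delta$.

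The second step is an exchange argument that bounds every coefficient by $1$. Conformality guarantees that $z + r_j \in \cP$ and $w - r_j \in \cP$ whenever $\lambda_j \ge 1$. The crucial point is the integrality bookkeeping: if $r_j$ is integral on $I \cup J$, then $z + r_j$ is integral on $J$ and $w - r_j$ is integral on $I$, so the former is feasible for~($J$-MIP) and the latter for~($I$-MIP). Optimality of $z$ and of $w$ then forces $c^\intercal r_j \le 0$ and $c^\intercal r_j \ge 0$ respectively, hence $c^\intercal r_j = 0$. Consequently $z + r_j$ would be another optimal solution of~($J$-MIP) strictly closer to $w$ in the $\ell_1$-norm, contradicting the minimal choice of $z$. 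Therefore $\lambda_j < 1$ for all $j$, and $\|w - z\|_\infty \le \sum_j \lambda_j \|r_j\|_\infty < (\text{number of generators}) \cdot \Delta$. This already reproduces, and slightly sharpens, the bound $n\Delta$ for arbitrary $I$ and $J$, and it pinpoints why the union $I \cup J$ is the natural quantity: the exchange step is legitimate precisely when the generators are integral on $I \cup J$.

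It remains to reduce the number of generators from the ambient dimension $n$ down to $|I \cup J|$, and this is where I expect the main difficulty and where Olson's theorem enters. On the coordinates outside $S := I \cup J$ no integrality is required, so the generators need only be integral on the $|S|$ coordinates of $S$. The plan is therefore to produce a conformal decomposition of $g$ using at most $|S|$ generators that are integral on $S$ (and still bounded in norm by $\Delta$), rather than the up to $n$ fully integral generators furnished by Carathéodory's theorem. I would encode the existence of such a short decomposition as the feasibility of an auxiliary mixed-integer program and establish that feasibility by a zero-sum argument: viewing the $S$-restrictions of the generators as elements of a finite abelian group of rank $|S|$ built from $\Z^{S}$, Olson's theorem supplies a nonempty zero-sum sub-collection, i.e.\ a subset of generators whose combined $S$-restriction is an integral lattice step. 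Taking that combined step merges several generators into one that the exchange argument can eliminate, and iterating drives the count down to at most $|S| = |I \cup J|$.

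The hard part, and the step I would spend the most care on, is exactly this last reduction: one must set up the finite abelian group so that its rank equals $|I \cup J|$, so that zero-sum subsets correspond to feasible integral moves that keep a solution both optimal and conformal to $g$, and so that the merged generators still obey $\|\cdot\|_\infty \le \Delta$. Reconciling Olson's purely combinatorial guarantee with the geometric requirements—membership in the cone $C$, conformality with $g$, and the norm bound—is the genuine crux; once the auxiliary program is shown feasible, combining it with the exchange argument of the second step yields $\|w - z\|_\infty < |I \cup J|\Delta$, the strict inequality arising because each surviving coefficient is strictly below $1$.
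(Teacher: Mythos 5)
Your overall architecture is close to the paper's (conformal decomposition of $w-z$ into integral generators of norm at most $\Delta$, an exchange/augmentation step certified by optimality on both sides, and Olson's theorem to exploit that only the coordinates in $I\cup J$ need to be integral), but the step that actually produces the factor $|I\cup J|$ is set up in a way that does not work. You propose to reduce the \emph{number} of generators to $|S|=|I\cup J|$, each with coefficient below $1$ and norm at most $\Delta$, by merging zero-sum sub-collections supplied by Olson's theorem. Merging a zero-sum subset $T$ replaces several generators by their (weighted) sum, whose $\ell_\infty$-norm can be as large as $\sum_{i\in T}\beta_i\,\Delta$ rather than $\Delta$; you name this as the crux but offer no way around it, and there is none along that route. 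The paper's accounting is different: it never reduces the generator count (which may be exponential). It fixes the full generating set $v^1,\dotsc,v^k$ of the sign-pattern cone, writes $\tilde z - w=\sum_i\lambda_i v^i$, maximizes $\sum_i\gamma_i$ over the set of $(\gamma_i)$ with $\gamma_i\in[0,\lambda_i]$ and $\sum_i\gamma_i v^i\in\Z^d\times\R^{n-d}$, and shows via Lemma~\ref{lemIntFeasbility} that the \emph{residual mass} $\sum_i(\lambda_i-\gamma_i)$ is less than $d$: if it were at least $d$, Olson's theorem (after a rational approximation with prime denominators and a limit argument) yields a \emph{fractional} sub-combination $\sum_i\beta_i v^i$, $\beta_i\in[0,\lambda_i-\gamma_i]$ not all zero, that is integral on the first $d$ coordinates, contradicting maximality. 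The bound is then $\|w-z\|_\infty\le\sum_i(\lambda_i-\gamma_i)\,\Delta<d\Delta$, independent of $k$. Your per-generator exchange (each individual $r_j$ integral on $S$ with coefficient at least $1$) cannot see these fractional combinations and therefore only bounds each coefficient, not their sum.

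Two further points. First, your cone $C=\{y: a_i^\intercal y\le 0 \text{ for every row } i \text{ tight at } z\}$ controls only the tight rows, so $z+r_j$ and $w-r_j$ can violate non-tight inequalities; you need the cone determined by the sign pattern of $A(w-z)$ over \emph{all} rows (as in the paper: $A_1y<0$, $A_2y\ge 0$), which is what makes every partial move stay between $Aw$ and $A\tilde z$ componentwise. ``Sign-compatible with $g$ coordinate by coordinate'' is the wrong notion here; it is $Ar_j$ that must be sign-compatible with $Ag$. Second, Olson's theorem produces $0/1$ zero-sum subsets in $\Z^d/p\Z^d$, while the coefficients $\lambda_i$ are real; converting the combinatorial statement into the needed fractional one requires the approximation-by-prime-denominators and compactness argument of Lemma~\ref{lemIntFeasbility}, which your sketch omits. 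If you replace ``reduce the number of generators'' by ``bound the total residual coefficient mass by $d$'' and fix the cone, your $\ell_1$-minimal choice of $z$ can be made to play the role of the paper's maximality of $\sum_i\gamma_i$, and the argument goes through.
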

To obtain our result, we make use of a result in additive combinatorics by Olson~\cite{Olson69} that determines the so-called Davenport constant of certain abelian groups.
We show how Olson's result implies that mixed-integer linear programs of a certain structure have non-zero solutions.
More precisely, we establish the following result, which may be of independent interest.
\begin{lemma} \label{lemIntFeasbility}
    Let $ d,k \in \Z_{\ge 1} $, $ u^1, \dotsc, u^k \in \Z^d $, and $ \alpha_1,\dotsc,\alpha_k \ge 0 $.
    If $ \sum_{i=1}^k \alpha_i \ge d $, then there exist $ \beta_i \in [0,\alpha_i] $ for $ i=1,\dotsc,k $ such that not all $\beta_1, \dots, \beta_k$ are zero and $ \sum_{i=1}^k \beta_i u^i \in \Z^d $.
\end{lemma}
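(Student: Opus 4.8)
The plan is to derive the statement from Olson's determination of the Davenport constant of finite abelian $p$-groups, after reducing the continuous problem to a discrete zero-sum problem. Recall that the Davenport constant $D(G)$ of a finite abelian group $G$ is the least integer $\ell$ such that every sequence of $\ell$ elements of $G$ has a nonempty subsequence summing to $0$, and that Olson's theorem gives $D((\Z/N\Z)^d) = 1 + d(N-1)$ whenever $N = p^e$ is a prime power. Write $U = (u^1 \mid \dots \mid u^k) \in \Z^{d\times k}$, so that we are seeking a nonzero $\beta$ in the box $B = \prod_{i=1}^k [0,\alpha_i]$ with $U\beta \in \Z^d$.

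The heart of the argument is the case in which every $\alpha_i$ is a prime-power fraction, say $\alpha_i = a_i/N$ with $N = p^e$ and $a_i \in \Z_{\ge 0}$. Here I would form the sequence over $G = (\Z/N\Z)^d$ that contains the residue $u^i \bmod N$ with multiplicity $a_i$. Its length is $\sum_i a_i = N\sum_i \alpha_i \ge Nd$, and since $Nd \ge 1 + d(N-1) = D(G)$ for every $d \ge 1$, Olson's theorem yields a nonempty zero-sum subsequence: integers $0 \le b_i \le a_i$, not all zero, with $\sum_i b_i u^i \equiv 0 \pmod N$. Setting $\beta_i = b_i/N \in [0,\alpha_i]$ then gives a nonzero $\beta \in B$ with $U\beta = \tfrac1N \sum_i b_i u^i \in \Z^d$, as desired. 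Note that the slack $Nd - D(G) = d-1 \ge 0$ is exactly what the hypothesis $\sum_i \alpha_i \ge d$ buys.

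It remains to reduce arbitrary nonnegative reals $\alpha_i$ to this situation. First I discard all indices with $\alpha_i = 0$ (the corresponding $\beta_i$ are forced to vanish), so that $\alpha_i > 0$ for all remaining $i$; the sum condition is preserved. Next, if $\ker U$ contains a nonzero nonnegative vector $v$, then $\epsilon v \in B$ for small $\epsilon > 0$ and $U(\epsilon v) = 0 \in \Z^d$, so we are done; hence I may assume that $U\beta = 0$ together with $\beta \ge 0$ forces $\beta = 0$. Finally, to handle irrational $\alpha_i$, I would approximate from above: fixing $p = 2$ and using that the fractions with denominator $2^n$ are dense in $\R$, set $\alpha_i^{(n)} = \ceil{2^n \alpha_i}/2^n \ge \alpha_i$, so that $\alpha_i^{(n)} \to \alpha_i$ and $\sum_i \alpha_i^{(n)} \ge d$. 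The prime-power case applied with $N = 2^n$ produces nonzero $\beta^{(n)} \in \prod_{i=1}^k [0,\alpha_i^{(n)}]$ with $U\beta^{(n)} \in \Z^d$.

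The last step is a compactness argument, and this is where I expect the main difficulty to lie, since a priori the discrete solutions $\beta^{(n)}$ could shrink to $0$ in the limit. This is ruled out by the reduction above: because $\beta^{(n)} \ge 0$ is nonzero, the assumption forces $U\beta^{(n)} \ne 0$, and being an integer vector it satisfies $\|U\beta^{(n)}\|_\infty \ge 1$. The $\beta^{(n)}$ lie in the bounded box $\prod_{i=1}^k[0,\alpha_i+1]$, so a subsequence converges to some $\beta^*$; then $\beta^* \in \prod_{i=1}^k[0,\alpha_i] = B$ (as $\alpha_i^{(n)} \to \alpha_i$), the integer vectors $U\beta^{(n)}$ converge to $U\beta^* \in \Z^d$, and $\|U\beta^*\|_\infty \ge 1$ guarantees $\beta^* \ne 0$. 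Thus $\beta^*$ is the required nonzero solution. The one step I would scrutinize most carefully is precisely this transition from Olson's finite, integral statement to the continuous real setting: in particular why naive rounding of the $\alpha_i$ fails (the rounding error spread across $k > d$ coordinates can exceed the available slack $d-1$) and why approximating by exact prime-power fractions from above, combined with the nonvanishing estimate $\|U\beta^{(n)}\|_\infty \ge 1$, is the right remedy.
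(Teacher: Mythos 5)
Your proof is correct and follows essentially the same route as the paper: reduce to the Davenport constant of a $p$-group via Olson's theorem, settle the case of $\alpha_i$ with a common prime-power denominator by extracting a zero-sum subsequence, and pass to general $\alpha_i$ by approximation from above and compactness. Two local differences are worth noting. First, you approximate with dyadic denominators $N = 2^n$, which requires Olson's theorem for the $p$-group $(\Z/2^n\Z)^d$ (Davenport constant $1 + d(2^n-1)$); the paper instead approximates with a sequence of growing \emph{prime} denominators, so that only the prime case $\Z^d/p\Z^d$ is needed. Both are covered by Olson's result, but your version invokes the slightly stronger $p$-group form rather than the special case stated in the paper. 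Second, you rule out degeneration of the limit $\beta^*$ to zero up front, by first dispatching the case where $\ker U$ contains a nonzero nonnegative vector and then using that each $U\beta^{(n)}$ is a nonzero integer vector, hence $\|U\beta^{(n)}\|_\infty \ge 1$, which passes to the limit. The paper instead repairs the degenerate limit a posteriori: if $\beta^* = 0$, the common integer value $z$ of the $U\beta^{(n)}$ must be $0$, so any single nonzero $\beta^{(n)}$ lies in $\ker U$ and can be rescaled by a small $\varepsilon > 0$ into the box $\prod_i [0,\alpha_i]$ (using $\alpha_i > 0$). The two fixes are logically equivalent ways of handling the same difficulty; your dichotomy isolates more explicitly why the limit cannot vanish.
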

While the bound in Theorem~\ref{thmMain1} depends on the number of integer variables, we are not aware of any pairs of MIPs for which distances between optimal solutions cannot be bounded just in terms of $ \Delta $.
For this reason, we state the following conjecture.
\begin{conjecture} \label{conjMain}
    There exists a function $ f : \Z_{\ge 1} \to \R $ such that the following holds.
    Let $I,J \subseteq [n] $ such that~($J$-MIP) has an optimal solution.
    For every optimal solution $ w $ of~($I$-MIP), there exists an optimal solution $z$ of ($J$-MIP) such that $ \| w - z \|_{\infty} \le f(\Delta) $.
\end{conjecture}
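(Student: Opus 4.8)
The plan is to follow the same \emph{conformal exchange} strategy that underlies Theorem~\ref{thmMain1}, but to replace its dimension-dependent zero-sum input (Lemma~\ref{lemIntFeasbility} applied with $d = |I \cup J|$) by a zero-sum statement whose threshold depends only on $\Delta$. Fix an optimal solution $w$ of~($I$-MIP) and, among all optimal solutions of~($J$-MIP), choose $z$ minimizing $\|w - z\|_1$; it then suffices to bound $\|w-z\|_\infty$ by a function of $\Delta$. Write $g = w - z$ and decompose it conformally as $g = \sum_{i=1}^k \alpha_i u^i$ with $\alpha_i > 0$ and integral circuits $u^i$ that are sign-compatible with $g$. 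Standard subdeterminant estimates give $\|u^i\|_\infty \le \Delta$, and conformality guarantees that every partial move $z + \sum_i \beta_i u^i$ with $\beta_i \in [0,\alpha_i]$ stays feasible for the linear relaxation, while a splitting of the circuits by the sign of $c^\intercal u^i$ lets one keep the objective at its optimal value for~($J$-MIP).

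The exchange step is then as follows: if there exist $\beta_i \in [0,\alpha_i]$, not all zero, such that $z' := z + \sum_i \beta_i u^i$ has $z'_j \in \Z$ for all $j \in J$, then $z'$ is again an optimal solution of~($J$-MIP) and $w - z' = \sum_i(\alpha_i - \beta_i)u^i$ is a strictly smaller conformal sum, contradicting the minimality of $\|w-z\|_1$. Because each $u^i$ is integral and $z_j \in \Z$, integrality of $z'$ on $J$ amounts to $\sum_i \beta_i\,(u^i|_J) \in \Z^{|J|}$, i.e.\ to a zero-sum condition for the residues of the coefficients $\beta_i$ against the columns $u^i|_J$. Thus the whole argument reduces to the following dimension-free analogue of Lemma~\ref{lemIntFeasbility}: there is a function $h$ so that whenever $\sum_i \alpha_i \ge h(\Delta)$ and each $u^i \in \Z^{d}$ has $\|u^i\|_\infty \le \Delta$, one can find a nonzero $\beta \in \prod_i[0,\alpha_i]$ with $\sum_i \beta_i u^i \in \Z^{d}$. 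Granting such an $h$, we would obtain $\sum_i \alpha_i < h(\Delta)$ and hence $\|w-z\|_\infty \le \sum_i \alpha_i \|u^i\|_\infty < h(\Delta)\,\Delta =: f(\Delta)$.

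The main obstacle is precisely establishing this dimension-free zero-sum bound, and I expect it to require a genuinely new idea. Clearing denominators turns the condition into finding a short zero-sum subsequence in the finite group $(\Z/N)^{d}$ generated by the residues $u^i|_J \bmod N$, and Olson's theorem bounds the relevant Davenport constant by roughly $\rank \cdot (\Delta - 1)$; this $\rank$ factor is exactly the dimension dependence we must eliminate. A purely group-theoretic bound cannot suffice, since the Davenport constant of $(\Z/\Delta)^{d}$ grows linearly in $d$. The idea I would pursue is to exploit the extra geometry that the circuits carry but that a bare group element forgets: every $u^i$ has $\infty$-norm at most $\Delta$ and is supported on the few coordinates of a single circuit, so the partial sums of any reordering move slowly through the torus $\R^{d}/\Z^{d}$. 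Combining this with a Steinitz-type rearrangement that keeps all partial sums in a ball of radius depending only on $\Delta$, together with a Minkowski/pigeonhole argument on the lattice points inside that ball, should produce two partial sums in the same residue class at bounded distance, whose difference yields the desired short zero-sum with coefficients in the prescribed boxes.

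As preliminary reductions that would both test the approach and simplify the general case, I would first treat $\Delta = 1$: there every circuit is a $0/\pm 1$ vector, the relevant residues vanish after a single unit step, and the exchange argument should already yield an absolute constant $f(1)$, confirming the shape of the bound. I would also attempt to remove the continuous variables at the outset by projecting along the purely continuous circuits (a Fourier--Motzkin / cone-splitting reduction), so that the zero-sum analysis only ever sees the integer coordinates in $I \cup J$; this would set up an induction on $\Delta$ in which one peels off a bounded-order cyclic factor of the residue group at each step, rather than confronting the full high-rank group at once.
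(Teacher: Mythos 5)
The statement you are attacking is Conjecture~\ref{conjMain}, and the paper does not prove it: it remains open there, supported only by the lower bound of Example~\ref{exLowerBound} and by Proposition~\ref{propMain2}, which settles the special case $\Delta \le 2$, $I,J \in \{\emptyset,[n]\}$ by polyhedral arguments (the Veselov--Chirkov properties of bimodular systems, Lemma~\ref{lemVC}), not by zero-sum combinatorics. You correctly treat the statement as open and offer a program rather than a proof, and you honestly flag the dimension-free zero-sum lemma as the missing ingredient. So the question is whether your program could in principle succeed.

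In the form you state it, it cannot, because the ``dimension-free analogue of Lemma~\ref{lemIntFeasbility}'' to which you reduce everything is provably false. Take $\Delta = 1$, $d$ arbitrary, $u^i = e^i$ the standard basis vectors for $i = 1,\dotsc,d$, and $\alpha_i = 1/2$. These are sign-compatible circuits of a totally unimodular system with $\|u^i\|_\infty = 1$, and $\sum_{i=1}^d \alpha_i = d/2$ exceeds any candidate threshold $h(1)$ once $d$ is large; yet $\sum_i \beta_i e^i \in \Z^d$ with $\beta_i \in [0,1/2]$ forces every $\beta_i = 0$. Note that this example already carries all the extra structure you propose to exploit (bounded $\infty$-norm, circuit support, conformality), so the Steinitz-rearrangement and pigeonhole route is blocked by it as well: no argument that only sees this data can produce a nonzero exchange. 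The diagnosis is that your final estimate $\|w - z\|_\infty \le \sum_i \alpha_i \|u^i\|_\infty$ is precisely where the factor $|I \cup J|$ of Theorem~\ref{thmMain1} enters, and along this route it is unavoidable: in the example the coefficient mass $\sum_i \alpha_i$ is $d/2$ while $\|w - z\|_\infty = 1/2$, so the total mass genuinely grows with dimension even when the max-norm distance is tiny. Any proof of Conjecture~\ref{conjMain} must therefore control $\big\|\sum_i (\alpha_i - \beta_i) u^i\big\|_\infty$ directly, choosing the exchange $\beta$ adaptively to cancel in \emph{every} coordinate, rather than bounding the coefficient sum --- a qualitatively different task from the Davenport-constant statement. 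This is consistent with how the paper gathers evidence: Proposition~\ref{propMain2} avoids the coefficient-sum bottleneck entirely and works with vertices and edges of $P$ and of the integer hull, and its $\Delta \le 2$ case already subsumes the $\Delta = 1$ warm-up you suggest.
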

In fact, we believe that $ f $ can be chosen to be a linear function.
We conclude this paper by discussing this conjecture and providing some conditions under which it holds.

\subsubsection*{Related work}
Theorem~\ref{thm0toN} was extended to the case of separable quadratic objective functions in~\cite[Theorem 2]{GSK1988} and later to the more general case of separable convex objective functions in~\cite[Theorem 3.3]{HS1990} and~\cite[Theorem 1]{WM1991}.
In~\cite{BAL1995}, it was shown that a closer analysis of the parameter $\Delta$ can lead to strengthened results for certain choices of the matrix $A$.
The proofs of these generalized results are similar to the proof of Theorem~\ref{thm0toN}, albeit with additional analysis. 
The proof of Theorem~\ref{thmMain1} that is presented in this paper is also similar to the proof of Theorem~\ref{thm0toN}, and, consequently, the result can be generalized to the settings of~\cite{BAL1995},~\cite{GSK1988},~\cite{HS1990}, and~\cite{WM1991} using the techniques presented therein.
However, in order to highlight the importance of the ideas developed in this paper, we prove Theorem~\ref{thmMain1} for linear objective functions and omit the additional analysis required for these generalizations. 
We reemphasize that we study how the parameters $ \Delta $, $  I $, and $ J $ affect distance of mixed-integer programs in inequality form.
For recent developments on how other parameters affect the distance of integer linear programs in standard form, see, e.g.~\cite{EW2018}.

Interestingly, the Davenport constant was previously used in~\cite{AND2015} in the context of the dijoins and Woodall's conjecture.

\subsubsection*{Outline}
We start by reviewing parts of the proof of Cook~et~al.~\cite{CGST1986} in Section~\ref{secMain1} and show how Lemma~\ref{lemIntFeasbility} can be applied to obtain Theorem~\ref{thmMain1}.
In Section~\ref{secDavenport}, we discuss the Davenport constant and the mentioned result by Olson~\cite{Olson69}, which allows us to prove Lemma~\ref{lemIntFeasbility}.
Finally, Section~\ref{SecDelta} contains a discussion of Conjecture~\ref{conjMain}.


\section{The proof of Theorem~\ref{thmMain1}} \label{secMain1}
Our proof of Theorem~\ref{thmMain1} follows the strategy developed by Cook~et~al.~\cite{CGST1986}, but differs in some parts in order to (i) be able to compare solutions of ($I$-MIP) and ($J$-MIP) with $ I,J \ne \emptyset $ directly, and to (ii) improve the bound of Theorem~\ref{thm0toN}.
For instance, we bypass the use of strong linear programming duality in~\cite{CGST1986}, which restricted one of the sets $ I, J $ to be the empty set.
\begin{proof}[of Theorem~\ref{thmMain1}]
    Without loss of generality, we assume that $ I \cup J = [d] $, where $ d \in [n] $.
    Let $ w \in \R^n $ be an optimal solution of ($I$-MIP).
    Choose any $ \tilde{z} \in \R^n $ that is an optimal solution of ($J$-MIP) and define $ y := \tilde{z} - w $.
    Partitioning the rows of $ A $ into submatrices $ A_1, A_2 $ such that $ A_1 y < 0 $ and $ A_2 y \ge 0 $, we define the cone
    \[
        C := \{ x \in \R^n : A_1 x \le 0, \, A_2 x \ge 0 \}.
    \]
    Note that $ C $ is defined by an integral matrix arising from $ A $ by multiplying some of its rows by $ -1 $.
    By standard arguments involving Cramer's rule, there exist integer vectors $ v^1,\dotsc,v^k \in \Z^n $ such that
    \[
        C = \{ \lambda_1 v^1 + \dotsb + \lambda_k v^k : \lambda_1,\dotsc,\lambda_k \ge 0 \},
    \]
    and $ \|v^i\|_\infty \le \Delta $ for $ i = 1,\dotsc,k $.
    Observe that $ y \in C $, and hence, there exist $ \lambda_1,\dotsc,\lambda_k \ge 0 $ such that
    \[
        y = \lambda_1 v^1 + \dotsb + \lambda_k v^k.
    \]
    Consider the set
    \[
        G := \{ (\bar\gamma_1,\dotsc,\bar\gamma_k) : \bar\gamma_i \in [0,\lambda_i] \text{ for all } i \in [k], \, \bar\gamma_1 v^1 + \dotsb + \bar\gamma_k v^k \in \Z^d \times \R^{n - d} \},
    \]
    which is non-empty (it contains the all-zero vector) and compact.
    Thus, there exists some $ (\gamma_1,\dotsc,\gamma_k) \in G $ maximizing $ \bar\gamma_1 + \dotsb + \bar\gamma_k $ over $ G $.
    Recalling that $ y = \tilde{z} - w = \sum_{i = 1}^k \lambda_i v^i $, we define the vectors 
    \[
        z := \tilde{z} - \sum_{i=1}^k \gamma_i v^i = w + \sum_{i=1}^k (\lambda_i - \gamma_i) v^i 
    \]
    and
    \[
        \tilde{w} := w + \sum_{i=1}^k \gamma_i v^i = \tilde{z} - \sum_{i=1}^k (\lambda_i - \gamma_i) v^i.
    \]

    First, we claim that $ z $ is feasible for ($ J $-MIP) and $ \tilde{w} $ is feasible for ($ I $-MIP).
    To see this, observe that the coordinates of $ z $ indexed by $ J $ are integer since this is the case for $ \tilde{z} $, $(\gamma_1, \dotsc, \gamma_k)\in G$, and $ J \subseteq [d] $.
    Similarly, the coordinates of $ \tilde{w} $ indexed by $ I $ are integer.
    Furthermore, by the definition of $ C $, we see that
    \[
        \begin{array}{rclclcl}
            A_1z&=&A_1w+\sum_{i=1}^k(\lambda_i-\gamma_i)A_1v^i &\le& A_1w&\le&b_1\\
            A_2z&=&  A_2\tilde{z}-\sum_{i=1}^k\gamma_i A_2v^i &\le&  A_2\tilde{z}&\le&b_2\\
            A_1\tilde{w}&=&A_1w+\sum_{i=1}^k\gamma_iA_1v^i&\le& A_1w&\le&b_1\\
            A_2\tilde{w}&=&A_2\tilde{z}-\sum_{i=1}^k(\lambda_i-\gamma_i)A_2v^i &\le& A_2\tilde{z} &\le&b_2,
        \end{array}
    \]
    which shows that $ Az \le b $ and $ A\tilde{w} \le b $.

    Second, we claim that $ z $ is optimal for ($ J $-MIP).
    Indeed, since $ w $ is optimal for ($ I $-MIP), we must have
    \[
        c^\intercal w \ge c^\intercal \tilde{w} = c^\intercal w + c^\intercal \left(\sum_{i=1}^k \gamma_i v^i\right).
    \]
    Hence, $ c^\intercal (\sum_{i=1}^k \gamma_i v^i )\le 0 $.
    This yields
    \[
        c^\intercal z = c^\intercal \tilde{z} - c^\intercal \left(\sum_{i=1}^k \gamma_i v^i \right)\ge c^\intercal \tilde{z}.
    \]
    Since $\tilde{z}$ is optimal for ($J$-MIP), the latter inequality implies that $ z $ is also an optimal solution for ($ J $-MIP).
    The distance from $z$ to $ w $ can be bounded as follows:
    \[
        \|w-z\|_{\infty}
        = \| \sum_{i=1}^k (\lambda_i - \gamma_i) v^i \|_{\infty}
        \le \sum_{i=1}^k (\lambda_i - \gamma_i) \|v^i\|_{\infty}
        \le \sum_{i=1}^k (\lambda_i - \gamma_i) \Delta .
    \]

    It remains to argue that $ \sum_{i=1}^k (\lambda_i - \gamma_i) < d $.
    To this end, let us assume the contrary.
    Defining $ \alpha_i := \lambda_i - \gamma_i \ge 0 $ for each $ i \in [k] $, this means that $ \sum_{i=1}^k \alpha_i \ge d $.
    Thus, defining $ u^i \in \Z^d $ to be the projection of $ v^i $ onto the first $ d $ coordinates, we can invoke Lemma~\ref{lemIntFeasbility} to obtain $ \beta_1,\dotsc,\beta_k $ with $ \beta_i \in [0,\alpha_i] $ for each $ i \in [k] $ such that not all $ \beta_1,\dotsc,\beta_k $ are zero and $ \sum_{i=1}^k \beta_i u^i \in \Z^d $.
    Observe that $ \sum_{i=1}^k \beta_i v^i \in \Z^d \times \R^{n - d} $.
    For each $ i \in [n] $, define $ \gamma_i' := \gamma_i + \beta_i \ge 0 $ and note that $ \gamma_i' \le \gamma_i + \alpha_i = \lambda_i $.
    Furthermore, we have
    \[
        \sum_{i=1}^k \gamma_i' v^i = \underbrace{\sum_{i=1}^k \gamma_i v^i}_{\in\ \Z^d \times \R^{n - d}} + \underbrace{\sum_{i=1}^k \beta_i v^i}_{\in\ \Z^d \times \R^{n - d}} \in \Z^d \times \R^{n - d},
    \]
    and so $ (\gamma_1',\dotsc,\gamma_k') \in G $.
    However, since not all $ \beta_1,\dotsc,\beta_k $ are zero, $ \gamma'_1 + \dotsb + \gamma'_k > \gamma_1 + \dotsb + \gamma_k $, which contradicts the maximality of $ (\gamma_1,\dotsc,\gamma_k) $.
    \qed
\end{proof}


\section{The Davenport constant and the proof of Lemma~\ref{lemIntFeasbility}} \label{secDavenport}
We reduce the proof of Lemma~\ref{lemIntFeasbility} to a problem in additive combinatorics about the Davenport constant of certain abelian groups.
\begin{definition}[Davenport constant]
    Let $ G $ be a finite abelian (additive) group.
    The \emph{Davenport constant} of $ G $ is the smallest $ k \in \Z_{\ge 1} $ such that for every (not necessarily distinct) elements $ g^1, \dots, g^k\in G $, there exists a non-empty set $ I \subseteq [k] $ such that $ \sum_{i \in I} g^i = e $, where $ e $ is the identity element of $ G $.
\end{definition}
While determining the Davenport constant of a general abelian group is an open problem, Olson~\cite{Olson69} provided a tight answer for the case of so-called $ p $-groups.
A special case of his result reads as follows.
\begin{theorem}[Olson~\cite{Olson69}] \label{thmDavenport}
    Let $ d,p \in \Z_{\ge 1} $ with $ p $ prime.
    The Davenport constant of $ \Z^d / p\Z^d $ is $ pd - d + 1 $.
\end{theorem}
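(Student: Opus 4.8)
The plan is to prove the two inequalities separately, writing $D$ for the Davenport constant of $G := \Z^d/p\Z^d \cong (\Z/p\Z)^d$ and setting $N := pd - d + 1 = d(p-1)+1$.

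For the lower bound $D \ge N$ I would exhibit a sequence of $N-1 = d(p-1)$ elements having no nonempty zero-sum subsequence. The natural candidate is the standard basis $e_1,\dotsc,e_d$ of $(\Z/p\Z)^d$, each listed $p-1$ times. A subsequence sums to the identity precisely when, in every coordinate, the number of chosen copies of the corresponding $e_j$ is divisible by $p$; since each basis vector is available at most $p-1$ times, the only way to meet this requirement is to choose no element at all. Hence $D > d(p-1)$.

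The upper bound $D \le N$ is the substantive part, and the plan is to argue inside the group algebra $\mathbb{F}_p[G]$. Writing $G$ multiplicatively with generators $t_1,\dotsc,t_d$ of order $p$ gives $\mathbb{F}_p[G] \cong \mathbb{F}_p[t_1,\dotsc,t_d]/(t_1^p - 1,\dotsc,t_d^p - 1)$. The key simplification is the substitution $s_j := t_j - 1$: in characteristic $p$ one has $t_j^p - 1 = (1+s_j)^p - 1 = s_j^p$, so $\mathbb{F}_p[G] \cong \mathbb{F}_p[s_1,\dotsc,s_d]/(s_1^p,\dotsc,s_d^p)$. This is a local ring whose maximal ideal $\mathfrak{m} = (s_1,\dotsc,s_d)$ is nilpotent: the monomials $s_1^{a_1}\cdots s_d^{a_d}$ with $0 \le a_j \le p-1$ form a basis, the top one $s_1^{p-1}\cdots s_d^{p-1}$ has degree $d(p-1)$, and any product of $d(p-1)+1 = N$ of the $s_j$ forces some exponent to reach $p$ and hence vanishes. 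Thus $\mathfrak{m}^N = 0$.

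With this in hand I would conclude as follows. Writing $[g]$ for the basis element of $\mathbb{F}_p[G]$ indexed by $g \in G$ (so $[g][h] = [g+h]$ and $[e] = 1$), each factor $[g^i] - [e]$ lies in $\mathfrak{m}$, whence $\prod_{i=1}^N ([g^i]-[e]) \in \mathfrak{m}^N = 0$. Expanding the product over subsets $I \subseteq [N]$ gives
\[
    0 = \prod_{i=1}^N \bigl([g^i]-[e]\bigr) = \sum_{I \subseteq [N]} (-1)^{N-|I|}\,\Bigl[\textstyle\sum_{i \in I} g^i\Bigr].
\]
Since the elements $\{[g] : g \in G\}$ are linearly independent over $\mathbb{F}_p$, reading off the coefficient of $[e]$ yields
\[
    \sum_{\substack{I \subseteq [N] \\ \sum_{i \in I} g^i = e}} (-1)^{N-|I|} = 0 \quad \text{in } \mathbb{F}_p.
\]
The empty set contributes the nonzero term $(-1)^N$, so this sum cannot consist of that term alone; hence some \emph{nonempty} $I$ satisfies $\sum_{i \in I} g^i = e$, which is exactly what $D \le N$ requires. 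The main obstacle is the upper bound, and within it the delicate step is establishing the nilpotency index $\mathfrak{m}^N = 0$ through the change of variables $s_j = t_j - 1$; once this is in place the combinatorial extraction is immediate.
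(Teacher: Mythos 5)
Your proposal is correct, but note that the paper itself gives no proof of this statement: Theorem~\ref{thmDavenport} is quoted from Olson~\cite{Olson69} as a black box (only the special case of the elementary abelian group $\Z^d/p\Z^d$ of Olson's result on $p$-groups is stated), so there is no in-paper argument to compare against. What you have written is essentially Olson's own group-ring argument specialized to this case, and all the steps check out: the lower bound via $p-1$ copies of each standard basis vector is the standard extremal sequence; the identification $\mathbb{F}_p[G]\cong \mathbb{F}_p[s_1,\dotsc,s_d]/(s_1^p,\dotsc,s_d^p)$ via $s_j=t_j-1$ and the Frobenius identity $(1+s_j)^p=1+s_j^p$ is valid; the nilpotency $\mathfrak{m}^N=0$ for $N=d(p-1)+1$ follows by pigeonhole on the exponents of the monomial generators of $\mathfrak{m}^N$; and extracting the coefficient of $[e]$ from $\prod_{i=1}^N([g^i]-[e])=0$ correctly forces a nonempty zero-sum subset because the empty set alone contributes $(-1)^N\ne 0$ in $\mathbb{F}_p$. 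The one place worth stating slightly more carefully is the nilpotency step: $\mathfrak{m}^N$ is generated by products of $N$ of the generators $s_j$ (times ring elements), so the pigeonhole argument on those degree-$N$ monomials does suffice, but your phrasing ``any product of $N$ of the $s_j$'' should be read as covering all generators of the ideal power, not just products of distinct variables. With that reading the proof is complete.
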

\begin{corollary} \label{corDavenport}
    Let $ d,p \in \Z_{\ge 1} $ with $ p $ prime.
    Let $ f^1, \dots, f^r \in \Z^d $ such that $ r \ge pd - d + 1 $.
    Then there exists a non-empty set $ I \subseteq [r] $ such that $ \sum_{i \in I} f^i \in p\Z^d $.
\end{corollary}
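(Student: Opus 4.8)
The plan is to derive Corollary~\ref{corDavenport} directly from Theorem~\ref{thmDavenport} by working inside the quotient group $\Z^d / p\Z^d$. The key observation is that the Davenport constant statement and the corollary statement are essentially the same fact, translated between the language of abstract finite abelian groups and the concrete language of integer vectors modulo $p$.

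First I would set $G := \Z^d / p\Z^d$, which is a finite abelian group, and invoke Theorem~\ref{thmDavenport} to record that its Davenport constant equals $pd - d + 1$. Next, for each $i \in [r]$ I would define $g^i \in G$ to be the image of $f^i$ under the canonical projection $\Z^d \to \Z^d / p\Z^d$, so that $g^i$ is the residue class of $f^i$ modulo $p$. The identity element $e$ of $G$ is the class $p\Z^d$, i.e.\ the class of the zero vector. Since $r \ge pd - d + 1$, we have at least as many elements $g^1, \dots, g^r$ as the Davenport constant of $G$, so the defining property of the Davenport constant applies.

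By that defining property, there exists a non-empty set $I \subseteq [r]$ such that $\sum_{i \in I} g^i = e$. The final step is to translate this back: because the projection $\Z^d \to G$ is a group homomorphism, $\sum_{i \in I} g^i$ is precisely the image of $\sum_{i \in I} f^i$ in $G$, and the equation $\sum_{i \in I} g^i = e$ says exactly that this image is the zero class, i.e.\ that $\sum_{i \in I} f^i \in p\Z^d$. This is the desired conclusion.

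I do not anticipate a genuine obstacle here, since the result is a routine reformulation; the only point requiring a little care is the direction of the Davenport-constant inequality. The definition guarantees that \emph{every} collection of $k$ elements admits a non-empty zero-sum subset, where $k$ is the Davenport constant; one must make sure to apply it with $k = pd - d + 1 \le r$, so that having $r$ elements (possibly more than $k$) still suffices—one simply applies the property to any $k$ of them, or equivalently notes that the guarantee for the threshold value persists for all larger collections. I would state this explicitly to avoid any off-by-one confusion, and otherwise the proof is immediate.
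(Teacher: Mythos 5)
Your proof is correct and is exactly the routine reduction the paper has in mind (the paper in fact states Corollary~\ref{corDavenport} without proof, treating it as an immediate consequence of Theorem~\ref{thmDavenport}). Your explicit handling of the case $r > pd-d+1$ by restricting to any $pd-d+1$ of the elements is the right way to close the only small gap in the translation.
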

\begin{proof}[of Lemma~\ref{lemIntFeasbility}]
    By removing any vectors $u^i$ such that $\alpha_i =0$, we assume that $ \alpha_i > 0 $ for all $ i \in [k] $.
    We split the proof into two cases.

    First, assume that there exists a prime $ p $ such that for each $i\in [k]$, there is some $q_i \in \Z_{\ge 0}$ such that $ \alpha_i = q_i / p $.
    Consider the list
    \[
        \underbrace{u^1, \dots, u^1}_{q_1 \text{ copies}}, \underbrace{u^2, \dots, u^2}_{q_2 \text{ copies}}, \dotsc, \underbrace{u^k, \dots, u^k}_{q_k \text{ copies}}
    \]
    consisting of $ r := q_1 + \dotsb q_k = p (\alpha_1 + \dotsb + \alpha_k ) $ many vectors in $ \Z^d $.
    The inequality $\sum_{i=1}^k\alpha_i \ge d$ holds by assumption, so $ r \ge p d \ge pd - p + 1 $.
    Hence, by Corollary~\ref{corDavenport}, we obtain $ \ell_1,\dotsc,\ell_k $ with $ \ell_i \in \{0,\dotsc,q_i\} $ for $ i = 1,\dotsc,k $ such that not all $ \ell_1,\dotsc,\ell_k $ are zero and $ \sum_{i=1}^k \ell_i u^i \in p \Z^d $.
    Defining $ \beta_i := \ell_i / p $, we obtain $ \sum_{i=1}^k \beta_i u^i \in \Z^d $, where $ \beta_i \in [0,\alpha_i] $ for each $ i = 1,\dotsc,k $, and not all $ \beta_1, \dotsc, \beta_k $ are zero.
    The values $\beta_1, \dotsc, \beta_k$ prove the desired result in this first case.

    The case of general $ \alpha_1,\dotsc,\alpha_k $ is handled by a limit argument.
    The vector $ (\alpha_1,\dotsc,\alpha_k) $ can be approximated using fractions with prime denominators in the following way.
    For each $ j \in \Z_{\ge 1} $ there exists a prime $ p_j $ and integers $ q_{1,j},\dotsc,q_{k,j} \in \Z_{\ge 0} $ with $ q_{i,j}/p_j \in [\alpha_i, \alpha_i + 1/j] $ for all $ i \in [k] $.
    By construction,
    \[
        \lim_{j \to \infty} (q_{1,j} / p_j,\dotsc,q_{k,j} / p_j) = (\alpha_1,\dotsc,\alpha_k).
    \]
    By the previous case, for each $ j \in \Z_{\ge 1} $ there exist $ \beta_{1,j},\dotsc,\beta_{k,j} $ with $ \beta_{i,j} \in [0,q_{i,j}/p_j] $ such that not all $ \beta_{1,j},\dotsc,\beta_{k,j} $ are zero and $ \sum_{i=1}^k \beta_{i,j} u^i \in \Z^d $.
    Since the sequence $ (\beta_{1,j},\dotsc,\beta_{k,j}) $ ($ j \in \Z_{\ge 1} $) is contained in the compact set $ [0,\alpha_1 + 1] \times \dotsb \times [0,\alpha_k + 1] $, it contains a convergent subsequence.
    Thus, we assume that the limit
    \[
        \lim_{j \to \infty} (\beta_{1,j},\dotsc,\beta_{k,j}) =: (\beta_1,\dotsc,\beta_k)
    \]
    exists.
    For each $ i \in [k] $, the fact that $ \lim_{j \to \infty} q_{i,j}/p_j = \alpha_i $ together with $ \beta_{i,j} \in [0,q_{i,j}/p_j]$ for all $ j \in \Z_{\ge 1} $ implies that $ \beta_i \in [0,\alpha_i] $.
    Also, as there are only finitely many points in $ \Z^d $ of the form $ \sum_{i=1}^k \gamma_i u^i $ with $ \gamma_i \in [0, \alpha_i + 1]$ for each $ i \in [k]$, there exists some point $ z \in \Z^d $ such that
    \begin{equation}
        \label{eqFriday}
        \beta_{1,j} u^1 + \dotsb + \beta_{k,j} u^k = z
    \end{equation}
    holds for infinitely many $ j \in \Z_{\ge 1} $.
    This implies that $ \beta_1 u^1 + \dotsb + \beta_k u^k = z \in \Z^d $.
    If $\beta_1, \dots, \beta_k$ are not all zero, then they prove the desired result. 

    Otherwise, $\beta_1 = \dotsb = \beta_k = 0$, so $ z = 0 $.
    Choose any $ j \in \Z_{\ge 1} $ that satisfies~\eqref{eqFriday} and consider the vector $ (\varepsilon \beta_{1,j},\dotsc,\varepsilon\beta_{k,j}) $, where $ \varepsilon > 0 $ is chosen such that $ \varepsilon \beta_{i,j} \in [0,\alpha_i] $ holds for all $ i \in [k] $.
    Note that $\varepsilon$ exists since all $ \alpha_i $ are assumed to be positive.
    Not all components of $ (\varepsilon \beta_{1,j},\dotsc,\varepsilon\beta_{k,j}) $ are zero and 
    \[
        \varepsilon \beta_{1,j} u^1 + \dotsb + \varepsilon \beta_{k,j} u^k
        = \varepsilon (\beta_{1,j} u^1 + \dotsb + \beta_{k,j} u^k)
        = \varepsilon z
        = 0 \in \Z^d.
    \]
    Thus, the values $\varepsilon \beta_{1,j},\dotsc,\varepsilon\beta_{k,j}$ prove the desired result.
    \qed
\end{proof}


\section{Bounding distance in terms of $ \Delta $} \label{SecDelta}
We remark that all bounds discussed in this paper actually hold for arbitrary (not necessarily integer) right-hand sides $ b $.
A simple example given in~\cite[\S 17.2]{AS1986} shows that the bound of $ n \Delta $ is best-possible when comparing ($ \emptyset $-MIP) and ($ [n] $-MIP) for arbitrary $b$.
However, that example relies on the purely fractional components of $ b $, which may disappear after standard preprocessing of a linear integer program.
Assuming that $ b $ is integral, the following example shows that the distance depends at least linearly on $ \Delta $.
\begin{example}\label{exLowerBound}
For $ \delta \in \Z_{\ge 1} $, define
\[
A = \begin{bmatrix}
        -\delta &  0 \\
         \delta & -1
    \end{bmatrix},
    \quad
    b = \begin{bmatrix} -1 \\ 0 \end{bmatrix},
    \quad
    c = \begin{bmatrix} 0 \\ -1 \end{bmatrix}.
\]
Here, $ \Delta = \delta $.
The point $w = (1/\delta, 1)^\intercal$ is the unique optimal solution to solution of ($\emptyset$-MIP), and the point $z = (1, \delta)$ is the unique optimal solution of both ($\{1\}$-MIP) and ($\{1,2\}$-MIP).
For $J\in \{\{1\}, \{1,2\}\}$ and any optimal solution $w$ of ($\emptyset$-MIP), the closest optimal solution $z$ of ($J$-MIP) satisfies $\|z-w\|_{\infty} =  \delta -1 = \Delta -1$. \hspace*{\stretch{1}}$\diamond$
\end{example}
We are not aware of any pairs of MIPs for which distances between optimal solutions cannot be bounded just in terms of $ \Delta $.
For this reason, we believe that the distance bounds provided in this paper can be improved to a function that only depends on $ \Delta $, see Conjecture~\ref{conjMain}.
A case in which this conjecture holds is given by the following statement.
\begin{proposition} \label{propMain2}
    Assume that $ \Delta \le 2 $.
    Let $ I,J \in \{\emptyset, [n]\} $ such that~($ J $-MIP) has an optimal solution.
    For every optimal solution $ w $ of~($I$-MIP), there exists an optimal solution $ z $ of ($J$-MIP) such that $\|w-z\|_{\infty} \le \Delta$.
\end{proposition}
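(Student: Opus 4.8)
The plan is to dispose of the trivial pairs, reduce to half-integral vertices, and then upgrade the estimate coming from the construction in the proof of Theorem~\ref{thmMain1}. Since $I=J$ is settled by taking $z=w$, only $(I,J)=(\emptyset,[n])$ and its mirror image $(I,J)=([n],\emptyset)$ remain, and these two are interchanged by swapping the roles of the two programs, so I would focus on $(\emptyset,[n])$. Here $w$ is optimal for the relaxation and may be taken to be a vertex of $\{x:Ax\le b\}$. Writing $A'x=b'$ for an invertible square subsystem of the constraints tight at $w$, Cramer's rule gives $w=(A')^{-1}b'$ with every coordinate a ratio of an integer and $\det A'$, and $|\det A'|\le\Delta\le 2$. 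Hence $2w\in\Z^n$. If $|\det A'|=1$ (in particular whenever $\Delta\le 1$), then $w\in\Z^n$, so $w$ is feasible and optimal for $([n]\text{-MIP})$ and $z=w$ gives distance $0\le\Delta$. The remaining case is $\Delta=2$ with $w$ genuinely half-integral, and the symmetric pair $([n],\emptyset)$ reduces to the same situation after choosing the auxiliary optimum $\tilde z$ to be a vertex.

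For $\Delta=2$ I would run the cone construction of Theorem~\ref{thmMain1} with $d=n$, obtaining an optimal $z$ of $([n]\text{-MIP})$ with $r:=z-w=\sum_{i=1}^k\alpha_i v^i$, where $v^i\in\Z^n$, $\|v^i\|_\infty\le\Delta$, and $\alpha_i=\lambda_i-\gamma_i\ge 0$. The maximality of $(\gamma_1,\dots,\gamma_k)$ supplies two free facts. First, $\alpha_i<1$ for every $i$: if $\alpha_i\ge 1$, then replacing $\gamma_i$ by $\gamma_i+1$ keeps $\sum_j\gamma_j v^j$ integral (we add the integer vector $v^i$) while increasing $\sum_j\gamma_j$. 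Second, $r\notin\Z^n$ unless $r=0$, since otherwise $\beta=\alpha$ would be an admissible nonzero choice contradicting maximality. Together with $2w\in\Z^n$ this pins down the picture I want to exploit: $r\in\tfrac12\Z^n\setminus\Z^n$, and no nonzero $\beta$ with $0\le\beta_i\le\alpha_i$ satisfies $\sum_i\beta_i v^i\in\Z^n$.

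The heart of the argument is to convert these constraints into the sharp bound $\|r\|_\infty\le 2=\Delta$. My plan is to pass to fractional parts. With $n_i:=\lfloor 2\alpha_i\rfloor\in\{0,1\}$, the coefficients $\beta_i:=\alpha_i-\tfrac12 n_i$ are admissible, namely $0\le\beta_i\le\alpha_i$, and satisfy $\sum_i\beta_i v^i=r-\tfrac12\sum_{i:\alpha_i\ge 1/2}v^i\in\tfrac12\Z^n$. By the maximality property this combination can never lie in $\Z^n$ unless $\beta=0$, so the obstruction to integrality of such admissible half-integral moves is forced to live in the quotient $\Z^n/2\Z^n$. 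In this purely mod-$2$ setting the residues $v^i\bmod 2$ that actually occur must form a zero-sum-free family, and I would invoke Theorem~\ref{thmDavenport} with $p=2$, which gives the Davenport constant of $\Z^n/2\Z^n$ and thereby caps the size and forces independence of this family.

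The step I expect to be the main obstacle is the very last one: controlling the combinatorics of the residues $v^i\bmod 2$ bounds how many generators can genuinely participate and forces them to be independent over $\Z/2$, but it does not, by itself, bound $\|r\|_\infty$, since a half-integral combination $\tfrac12\sum_{i}v^i$ of several bounded generators could a priori accumulate in one coordinate. To close this gap I would bring in the bimodular geometry rather than a coordinatewise estimate: the $v^i$ are extreme rays of a cone defined by $\pm$rows of $A$, and the Smith normal form of the tight system $A'$ is $\operatorname{diag}(1,\dots,1,2)$, so modulo $\Z^n$ the vertex $w$ is carried by a single primitive residue. I expect this to collapse the admissible half-integral combination to essentially one generating direction, yielding $\|r\|_\infty\le\|v\|_\infty\le\Delta$; comparing with Example~\ref{exLowerBound}, where the distance equals $\Delta-1$, confirms the bound is of the correct order.
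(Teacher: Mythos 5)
Your proposal has a genuine gap at exactly the step you flag as the main obstacle, and the surrounding reductions also contain errors. First, the two directions are \emph{not} interchangeable by symmetry: the statement has the form ``for every optimal $w$ of ($I$-MIP) there exists a nearby optimal $z$ of ($J$-MIP)'', and this directed statement does not transform into its mirror under swapping $I$ and $J$. The paper treats $(\emptyset,[n])$ and $([n],\emptyset)$ by genuinely different arguments (its Cases 1--2 versus Cases 3--4; the latter needs a perturbed objective $\lambda c + d$ to isolate $w$ as a unique integer optimum, and only in that direction does the convex-combination trick reduce general $w$ to vertices, because the LP-optimal set is convex while the IP-optimal set is not). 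Relatedly, in the direction $(\emptyset,[n])$ you cannot simply ``take $w$ to be a vertex'': the bound must hold for the given $w$, and reducing to a vertex is nontrivial precisely here (the paper's Case 2 builds an auxiliary polytope $\tilde P$ from a box around $w$ to do this, spending $1$ of the budget on $\|w-w'\|_\infty$ and needing the sharper bound $\Delta-1$ for vertices).

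The central problem, however, is that the cone construction from Theorem~\ref{thmMain1} cannot be upgraded to $\|r\|_\infty\le\Delta$ by the tools you invoke. Your correct observations ($\alpha_i<1$, $r\in\tfrac12\Z^n\setminus\Z^n$ or $r=0$, and zero-sum-freeness of the participating residues in $\Z^n/2\Z^n$) only bound the \emph{number} of participating generators: the Davenport constant of $\Z^n/2\Z^n$ is $n+1$, so maximality yields $\sum_i\alpha_i<n$ and hence $\|r\|_\infty<n\Delta$ --- which is just Theorem~\ref{thmMain1} again. To reach $\le\Delta$ you would need $\sum_i\alpha_i\le 1$, and no mod-$2$ counting argument can force that, since zero-sum-free families in $(\Z/2\Z)^n$ of size $n$ exist for every $n$. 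The Smith-normal-form remark about $w$ carrying a single residue of order $2$ does not collapse the real coefficients $\alpha_i$ to a single direction; it only constrains the combination modulo $\Z^n$. What is actually needed is a polyhedral fact special to $\Delta\le 2$, namely the Veselov--Chirkov result (Lemma~\ref{lemVC}): for bimodular systems, every vertex of the integer hull of the tangent cone at a vertex $x^*$ lies on an \emph{edge} of $P$ through $x^*$, and such an edge carries an integer point within $\ell_\infty$-distance $1$ of $x^*$. This one-dimensionality is the mechanism that replaces the sum $\sum_i\alpha_i v^i$ over many rays by a single edge direction, and it is not derivable from the additive-combinatorics machinery of Sections~2--3. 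Without it (or a proved substitute), your final step remains an unproved hope rather than an argument.
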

For the proof of Proposition~\ref{propMain2}, we use the following properties of so-called bimodular systems.
Unfortunately, similar results are unknown for matrices with $ \Delta \ge 3 $, and in future research, any similar results may be useful in extending Proposition~\ref{propMain2} to general $ \Delta $.
\begin{lemma}[{Veselov \and Chirkov~\cite[Thm. 2 and its proof]{VC2009}}] \label{lemVC}
    Let $ A \in \Z^{m \times n} $, $ b \in \Z^m $, $ c \in \R^n $ with $ \rank(A) = n $ such that the absolute value of any determinant of an $ n \times n $-submatrix of $ A $ is at most $ 2 $.
    Let $ x^* $ be a vertex of $ P := \{ x \in \R^n : Ax \le b \} $ and let $ Q $ be the convex hull of integer points satisfying all inequalities of $ Ax \le b $ that are tight at $ x^* $.
    Then
    \begin{enumerate}[label=(\alph*)]
        \item \label{VCedge} every vertex of $ Q $ lies on an edge of $ P $ that contains $ x^* $, and 
        \item \label{VCclose} every edge of $ P $ that contains $ x^* $ and some integer point, contains an integer point $ y^* $ with $ \|x^* - y^*\|_\infty \le 1 $.
    \end{enumerate}
\end{lemma}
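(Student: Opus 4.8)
The plan is to work locally at the vertex $x^*$. Writing $B \subseteq [m]$ for the set of rows of $A$ that are tight at $x^*$, I would replace $P$ by its tangent cone $P^* := \{x : A_B x \le b_B\} = x^* + C$, where $C := \{d : A_B d \le 0\}$ is pointed because $\rank(A) = n$. The edges of $P$ through $x^*$ are exactly the extreme rays of $C$ emanating from $x^*$, and $Q = \conv(P^* \cap \Z^n)$ is the integer hull of $P^*$. The two assertions then read: \ref{VCedge} every vertex of this integer hull lies on an extreme ray of $C$, and \ref{VCclose} every extreme ray carrying an integer point carries one within $\ell_\infty$-distance $1$ of $x^*$. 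Throughout I would use that every nonsingular $n\times n$ submatrix of $A$ has determinant of absolute value in $\{1,2\}$.

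For \ref{VCclose}, fix an extreme ray with primitive integer direction $r_0$. There are $n-1$ linearly independent rows $a_1,\dots,a_{n-1}$ of $A_B$ vanishing on $r_0$; completing them by a further tight row $a_n$ with $a_n^\intercal r_0 \ne 0$ gives a nonsingular $M$ with $|\det M| \in \{1,2\}$. The cofactor vector $\tilde r$ determined by $a_i^\intercal \tilde r = 0$ for $i<n$ and $a_n^\intercal \tilde r = \det M$ is integral with $\|\tilde r\|_\infty \le \Delta = 2$, since its entries are $(n-1)$-minors of $A$; moreover $\tilde r = g\, r_0$ for some positive integer $g$, so $\|r_0\|_\infty \le 2/g$. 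From $g\,(a_n^\intercal r_0) = \det M$ I get $g \mid \det M$, hence $g \in \{1,2\}$. The integer points on the edge-line are $x^* + t r_0$ with $t$ ranging over a coset of $\Z$; evaluating the integral functional $a_n^\intercal$ shows that the smallest such $t^* \ge 0$ satisfies $t^* \|r_0\|_\infty \le 1$ in every case: if $g=2$ then $\|r_0\|_\infty \le 1$ and $t^* < 1$; if $g=1$ and $|\det M|=1$ then $x^*$ itself is integral, so $t^*=0$; and if $g=1$ and $|\det M|=2$ then $t^* \in \{0,\tfrac12\}$ while $\|r_0\|_\infty \le 2$. This yields the integer point $y^* = x^* + t^* r_0$ with $\|x^* - y^*\|_\infty \le 1$.

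For \ref{VCedge}, suppose a vertex $v$ of $Q$ is not on any extreme ray, and let $F$ be the smallest face of $P^*$ containing $v$, so $v \in \operatorname{relint}(F)$ and $\dim F \ge 2$. Since $v$ is integral and $b$ is integral, $v$ has slack at least $1$ on every facet of $F$; hence it suffices to produce a nonzero integer direction $d$ in the linear span of $F$ with $|a_i^\intercal d| \le 1$ on every facet $a_i$ of $F$, for then $v \pm d \in F$ and $v = \tfrac12((v+d)+(v-d))$ contradicts that $v$ is a vertex. In the full-dimensional simplicial core this is where bimodularity enters decisively: choosing $n$ facet normals of $F$ as a basis $M$ with $|\det M| \le 2$, a vector $d$ with $M d \in \{-1,0,1\}^n$ is integral precisely when $Md$ is trivial in the quotient $\Z^n/M\Z^n$, a group of order at most $2$. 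If some $e_i$ is trivial there, take $Md = e_i$; otherwise all $e_i$ coincide in this $\Z/2$, so $e_i - e_j$ is trivial for some $i \ne j$ (which exist as $\dim F \ge 2$), and $Md = e_i - e_j$ works. Either way $d \ne 0$ and $|a_i^\intercal d| \le 1$, the desired contradiction.

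The hard part will be promoting this core argument to arbitrary faces $F$: a non-simplicial cone has more facets than its rank, so a direction $d$ controlled on a chosen basis $M$ may violate a facet outside $M$, where the slack is only $1$ but $|a_i^\intercal d|$ could equal $\Delta = 2$, and a lower-dimensional $F$ forces one to run the quotient argument inside the sublattice $\operatorname{lin}(F) \cap \Z^n$ rather than $\Z^n$. I would address this by triangulating $F$ from its apex, choosing $M$ adapted to the simplicial cell containing $v - x^*$, together with a reduction that transfers the bound $\Delta \le 2$ to the restricted system on $\operatorname{lin}(F)$; verifying that the constructed $d$ keeps $v\pm d$ inside all of $F$ is the crux. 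That bimodularity is essential is shown by the cone generated by $(2,1)$ and $(1,2)$, whose defining matrix has $\Delta = 3$ and whose integer hull has the interior vertex $(1,1)$, so the statement genuinely fails once $\Delta \ge 3$.
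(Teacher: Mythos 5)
You should first note that the paper contains no proof of this lemma at all: it is imported verbatim from Veselov and Chirkov \cite{VC2009}, so your proposal is being measured against that cited source rather than an in-paper argument. Your part~\ref{VCclose} is essentially a complete and correct argument, with one important caveat: the step $\|\tilde r\|_\infty \le 2$ relies on the $(n-1)\times(n-1)$ minors of $A$ being bounded by $2$, and this does \emph{not} follow from the lemma's literal hypothesis, which bounds only the $n \times n$ minors. In fact, under the literal hypothesis part~\ref{VCclose} is false: take $A = \begin{pmatrix} 3 & 1 \\ 1 & 1 \end{pmatrix}$ and $b = (1,0)^\intercal$, whose unique $2\times 2$ minor equals $2$; the vertex $x^* = (1/2,-1/2)$ lies on the edge $\{x : 3x_1 + x_2 = 1,\ x_1 + x_2 \le 0\}$, whose integer points are $(1,-2) + k(1,-3)$ with $k \in \Z_{\ge 0}$, all at $\ell_\infty$-distance at least $3/2$ from $x^*$. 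The lemma is only ever applied in Proposition~\ref{propMain2} under the paper's $\Delta \le 2$, i.e.\ with \emph{all} square subdeterminants bounded, which is exactly what your cofactor bound needs --- so your argument for \ref{VCclose} (including the case analysis $g \in \{1,2\}$, the integrality of $x^*$ when $g=1$, $|\det M|=1$, and the half-integer parameter $t^* \in \{0,\tfrac12\}$ when $|\det M| = 2$) is sound in the setting where the lemma is used, but you should have flagged that you are invoking more than the stated hypothesis.

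Part~\ref{VCedge}, however, has a genuine gap, which you yourself concede. The $\Z^n/M\Z^n$ argument (a group of order $|\det M| \le 2$, so either some $e_i$ or some $e_i - e_j$ is trivial) is indeed the right bimodular mechanism, but you execute it only when the minimal face $F$ is full-dimensional and the tangent cone is simplicial, and your repair plan does not work as sketched: the internal walls of a triangulation of $F$ from its apex are spanned by extreme rays and are generally \emph{not} rows of $A$, so the basis $M$ adapted to the cell containing $v - x^*$ need not satisfy $|\det M| \le 2$; moreover, on rows of $A$ outside $M$ you again face slack exactly $1$ against a possible $|a^\intercal d| = 2$, which is the very obstruction you set out to avoid, and the sublattice reduction for $\dim F < n$ is announced but never carried out. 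This unfinished portion is precisely where the substance of Veselov--Chirkov's Theorem~2 lies, so the proposal does not yet prove \ref{VCedge}. Finally, your tightness example is wrong: in the cone generated by $(2,1)$ and $(1,2)$ with apex $0$, the point $(1,1) = \tfrac13(2,1) + \tfrac13(1,2) + \tfrac13(0,0)$ lies in the convex hull of other integer points of the cone, hence is not a vertex of its integer hull (with an integral apex the integer hull is the cone itself); a genuine counterexample for $\Delta \ge 3$ requires a fractional apex.
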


\begin{proof}[of Proposition~\ref{propMain2}]
Let $ w \in \R^n $ be an optimal solution of ($I$-MIP) and let 
\[
P := \{ x \in \R^n : Ax \le b \} .
\]

We may assume that $ P $ is bounded.
Indeed, there exists some $U\in \Z_{\ge 1}$ such that the polytope
\(
P \cap \{x\in \R^n : -U \le x_i \le U~\forall i\in [n]\}
\)
contains $w$ and an optimal solution of ($J$-MIP).
It is sufficient to find an optimal solution $z$ of ($J$-MIP) contained in this polytope such that $\|w-z\|_{\infty}\le \Delta$. 
Also, the value of $\Delta$ for this polytope equals the value of $\Delta$ for $ P $.
Therefore,
by replacing $P$ with this polytope, we assume that $ P $ is bounded.
Since $P$ is non-empty and bounded, it follows that $\rank(A) = n$.
%
We split the remainder of the proof into four cases.
%

    \medskip
    
\noindent \emph{Case 1:} Assume that $ I = \emptyset $, $ J = [n] $, and $ w $ is a vertex of $ P $.

    \noindent
    Assume that $ w \in \Z^n$. It follows that $w$ is an optimal solution of ($[n]$-MIP). 
    Thus, setting $ z = w $ gives the desired bound $ \|w - z\|_\infty = 0 \le \Delta-1 \le \Delta$.

    \smallskip
    Assume that $w\not\in \Z^n$. 
    Since $w$ is a vertex of $P$, it must be the case that $ \Delta = 2 $ (otherwise, $A$ is totally unimodular, so $w\in \Z^n$).
    Let $ Q $ be defined as in Lemma~\ref{lemVC} and let $ z' \in \Z^n$ be a vertex of $ Q $ maximizing $ x \mapsto c^\intercal x $.
    By Lemma~\ref{lemVC}~\ref{VCedge}, $ z' $ lies on an edge $ E $ of $ P $ that contains $ w $.
    There is some $ z \in \Z^n \cap E$ such that $\|z-w\|_{\infty}\le \|\bar{z} - w\|_{\infty}$ for all $\bar{z}\in \Z^n\cap E$. 
    %
    The point $ z $ is in $ P $ and, by the optimality of $ w $ and $ z' $, it follows that $z$ is optimal for ($[n]$-MIP).
    By Lemma~\ref{lemVC}~\ref{VCclose}, we obtain the desired result $ \|w - z\|_\infty \le 1 \le \Delta -1 \le \Delta$.

    \smallskip
    Note that the optimal ($[n]$-MIP) solution $ z $ satisfies $ \|w - z\|_\infty \le \Delta - 1 $ in \emph{Case 1}.

    \medskip

\noindent \emph{Case 2:} Assume that $ I = \emptyset $ and $ J = [n] $.
    
    \noindent
    Let $ F \subseteq P $ be the face of all optimal solutions of ($\emptyset$-MIP) and let $ z' $ be an optimal solution of ($[n]$-MIP).
    Set $B:= \{x\in \R^n : \floor{w_i} \le x_i \le \ceil{w_i} ~ \forall i\in [n]\}$ and let $w'$ be a vertex of $B\cap F$.
    By construction of $B$, it follows that $\|w-w'\|_{\infty} \le 1$.

    Define the index sets
    \[
    \begin{array}{rcl}
    K_1 & := & \{i \in [n] : z_i \le \floor{w_i} \text{ and } w'_i = \floor{w_i} \},\\
    K_2 & := & \{i \in [n] : z_i \ge \floor{w_i} \text{ and } w'_i = \floor{w_i} \},\\
    K_3 & := & \{i \in [n] : z_i \le \ceil{w_i} \text{ and } w'_i = \ceil{w_i} \},\\
    K_4 & := & \{i \in [n] : z_i \ge \ceil{w_i} \text{ and } w'_i = \ceil{w_i} \},
    \end{array}    
    \]
    and the polytopes
    \[
    \begin{array}{rcl}
    P_1 & := & \{ x\in \R^n : x_i \le \floor{w_i} \text{ for all } i \in K_1\},\\
    P_2 & := & \{ x\in \R^n : x_i \ge \floor{w_i} \text{ for all } i \in K_2 \},\\
    P_3 & := & \{ x\in \R^n : x_i \le \ceil{w_i} \text{ for all } i \in K_3 \}, \text{ and }\\
    P_4 & := & \{ x\in \R^n : x_i \ge \ceil{w_i} \text{ for all } i \in K_4 \}.
    \end{array}    
    \]
    The polyhedron $\tilde{P}:= P \cap P_1 \cap P_2 \cap P_3 \cap P_4$ is non-empty, as it contains $w'$ and $z'$, and is bounded since it is contained in $P$, which itself is bounded. 
    Also, every inequality of $B\cap F$ that is tight at $w'$ is present (up to multiplication by $-1$) as an inequality defining $ \tilde{P} $. 
    Hence, $ w' $ is a vertex of $ \tilde{P} $.

    Note that $ \tilde{P} $ can be described by an integral inequality system whose coefficient matrix has rank equal to $ n $ and whose largest absolute value of a subdeterminant is equal to $\Delta$.
    Thus, by \emph{Case 1}, there is an integer point $ z \in \tilde{P} $ that maximizes $ x \mapsto c^\intercal x $ over $ \tilde{P} \cap \Z^n $ such that $ \|w' - z\|_\infty \le \Delta - 1 $.
    Since $ z' $ and $z$ are both in $ \tilde{P} $ and $z'$ is optimal for ($[n]$-MIP), the vector $ z $ is also optimal for ($[n]$-MIP).
    Furthermore, $ \|w - z\|_\infty \le \|w - w'\|_\infty + \|w' - z\|_\infty \le 1 + (\Delta - 1) = \Delta $.

    \medskip

\noindent \emph{Case 3:} Assume that $ I = [n] $, $ J = \emptyset $, and $ w $ is a vertex of $ \conv \{ x \in \Z^n : Ax \le b \} $.
   
    \noindent
    Assume that $ \Delta = 1 $. Hence, $A$ is a totally unimodular matrix, so $w$ is also an optimal solution of ($\emptyset$-MIP). 
    Setting $ z = w $, we obtain the desired result $ \|w - z\|_\infty = 0 \le \Delta$.

    Assume that $ \Delta = 2 $.
    The vector $w$ is a vertex of the polytope $R := \conv \{ x \in \Z^n : Ax \le b \} $, so there exists a vector $d\in \R^n$ such that 
    \(
    \{x\in R : d^\intercal x \ge d^\intercal \tilde{x}~\forall~ \tilde{x} \in R\} = \{w\}.
    \)   
    Let $F \subseteq P$ be the face of all optimal solutions of ($\emptyset$-MIP).
    Choose $ \lambda \ge 0 $ large enough so that there exists a vertex $ x^* \in F $ that maximizes $ x \mapsto (\lambda c + d)^\intercal x $ over $ P $.
    Setting $ \tilde{c} := \lambda c + d $, for every point $ x \in R \setminus \{w\} $ we have
    \[
        \tilde{c}^\intercal x
        = \lambda c^\intercal x + d^\intercal x
        < \lambda c^\intercal x + d^\intercal w
        \le \lambda c^\intercal w + d^\intercal w
        = \tilde{c}^\intercal w,
    \]
    where the first inequality follows from the definition of $ d $ and the second inequality from the optimality of $ w $.
    In other words, the point $ w $ is the unique maximizer of $ x \mapsto \tilde{c} x $ over $ R $.

    Given $x^*$, define $ Q $ as in Lemma~\ref{lemVC}.
    There exists a vertex $v$ of $Q$ that maximizes $x \mapsto \tilde{c} x$ over $Q$.
    Note that $v\in \Z^n$. 
    By Lemma~\ref{lemVC}~\ref{VCedge}, $v$ lies on an edge $E$ of $P$ that contains $x^*$. 
    Thus, $v$ maximizes $x \mapsto \tilde{c} x$ over $R$. 
    Since $w$ is the unique maximizer of this function over $R$, it follows that $w = v$. 
    In particular, $w$ lies on the edge $E$ of $P$ that contains $x^*$. 

    Now, consider again the objective function $x\mapsto c^\intercal x$. 
    If $c^\intercal x^* > c^\intercal w $, the open line segment from $ x^* $ to $ w $ does not contain integer points.
    Hence, by Lemma~\ref{lemVC}~\ref{VCclose}, $ \|x^* - w\|_{\infty} \le 1 $.
    Set $z = x^*$ to obtain the desired result $ \|z - w\|_{\infty} \le 1 \le \Delta$.
    If $c^\intercal x^* = c^\intercal w $, then $w$ is optimal for ($\emptyset$-MIP).
    Setting $ z = w $, we arrive at the desired result $ \|z - w\|_{\infty} = 0 \le \Delta $.

    \medskip

\noindent \emph{Case 4:} Assume that $ I = [n] $ and $ J = \emptyset $.
    
    \noindent 
    Since $ P $ is bounded, there exist vertices $ v^1, \dotsc,v^t $ of $ \conv \{ x \in \Z^n : Ax \le b \} $ and coefficients $ \lambda_1,\dotsc,\lambda_t > 0 $ with $ \lambda_1 + \dotsb + \lambda_t = 1 $ such that $ w = \sum_{j=1}^t \lambda_j v^j $.
    Note that $ v^1,\dotsc,v^t $ are all optimal solutions for ($[n]$-MIP).
    Thus, by \emph{Case 3}, for each $ j \in [t] $ there exists a point $ z^j $ that is optimal for ($\emptyset$-MIP) with $ \|v^j - z^j\|_\infty \le \Delta $.
    Define $ z := \sum_{j=1}^t \lambda_j z^j $.
    The point $z$ is also an optimal solution for the ($\emptyset$-MIP) and satisfies
    \begin{equation*}
        \|w - z\|_{\infty}
        = \big \| \sum_{j=1}^t \lambda_j (v^j - z^j) \big \| _\infty
         \le \sum_{j=1}^t \lambda_j \| v^j - z^j \|_\infty \\
         \le \sum_{j=1}^t \lambda_j \Delta
        = \Delta. 
    \end{equation*}
    \qed
\end{proof}


\section{Acknowledgements}

The authors would like to thank Ahmad Abdi for referring us to~\cite{AND2015}, where Olson's result was previously used. The second author was supported by the Alexander von Humboldt Foundation.


\bibliographystyle{spmpsci}
\bibliography{references}

\begin{thebibliography}{10}
\providecommand{\url}[1]{{#1}}
\providecommand{\urlprefix}{URL }
\expandafter\ifx\csname urlstyle\endcsname\relax
  \providecommand{\doi}[1]{DOI~\discretionary{}{}{}#1}\else
  \providecommand{\doi}{DOI~\discretionary{}{}{}\begingroup
  \urlstyle{rm}\Url}\fi

\bibitem{AND2015}
Andr\'as, M.: {M}atroid {M}etszetek {P}akol\'asa.
\newblock Master's thesis, E\"otv\"os Lor\'and Tudom\'anyegyetem
  Term\'eszettudom\'anyi Kar (2015)

\bibitem{BAL1995}
Baldick, R.: Refined proximity and sensitivity results in linearly constrained
  convex separable integer programming.
\newblock Linear Algebra and its Applications \textbf{226-228}, 389--407 (1995)

\bibitem{BJ1977}
Blair, C., Jeroslow, R.: The value function of a mixed integer program: I.
\newblock Discrete Mathematics \textbf{19}, 121--138 (1977)

\bibitem{BJ1979}
Blair, C., Jeroslow, R.: The value function of a mixed integer program: {II}.
\newblock Discrete Mathematics \textbf{25}, 7--19 (1979)

\bibitem{CGST1986}
Cook, W., Gerards, A., Schrijver, A., Tardos, E.: Sensitivity theorems in
  integer linear programming.
\newblock Mathematical Programming \textbf{34}, 251--264 (1986)

\bibitem{EW2018}
Eisenbrand, F., Weismantel, R.: Proximity results and faster algorithms for
  integer programming using the {S}teinitz lemma.
\newblock In: Proceedings of the Twenty-Ninth Annual ACM-SIAM Symposium on
  Discrete Algorithms, pp. 808--816 (2018)

\bibitem{GSK1988}
Granot, F., Skorin-Kapov, J.: Some proximity and sensitivity results in
  quadratic integer programming.
\newblock Mathematical Programming \textbf{47}(259-268) (1990)

\bibitem{HS1990}
Hochbaum, D.S., Shanthikumar, J.G.: Convex separable optimization is not much
  harder than linear optimization.
\newblock J. ACM \textbf{37}(4), 843--862 (1990).
\newblock \doi{10.1145/96559.96597}.
\newblock \urlprefix\url{http://doi.acm.org/10.1145/96559.96597}

\bibitem{Olson69}
Olson, J.E.: A combinatorial problem on finite abelian groups, {I}.
\newblock Journal of Number Theory \textbf{1}(8-10) (1969)

\bibitem{AS1986}
Schrijver, A.: Theory of linear and integer programming.
\newblock John Wiley \& Sons, Inc. New York, NY (1986)

\bibitem{VC2009}
Veselov, S., Chirkov, A.: Integer programming with bimodular matrix.
\newblock Discrete Optimization \textbf{6}, 220--222 (2009)

\bibitem{WM1991}
Werman, M., Magagnosc, D.: The relationship between integer and real solutions
  of constrained convex programming.
\newblock Mathematical Programming \textbf{51}, 133--135 (1991)

\end{thebibliography}
\end{document}